\documentclass[11pt,a4paper]{amsart}
\usepackage{amsmath,amsthm,latexsym,amsfonts,amssymb}
\usepackage{graphics,color,tikz}
\usepackage{mathptmx}
\usepackage[mathscr]{eucal}

\newtheorem{theorem}{Theorem}[section]
\newtheorem{lemma}[theorem]{Lemma}
\newtheorem*{definition}{Definition}

\numberwithin{equation}{section}

\renewcommand{\leq}{\leqslant}
\renewcommand{\geq}{\geqslant}

\DeclareMathOperator{\re}{Re}
\DeclareMathOperator{\im}{Im}

 \newcommand{\SL}{{\operatorname{SL}}}
 \newcommand{\GL}{{\operatorname{GL}}}
 
 \newcommand{\SU}{{\operatorname{SU}}}

\title{M\"obius Maps, Reflections and Lipschitz Constants}

\author[A. F. Beardon]{Alan F. Beardon} 
\address{Centre for Mathematical Sciences, University of Cambridge, Wilberforce Road, CB3 0WA Cambridge, United Kingdom}
\email{afb@dpmms.cam.ac.uk}
\author[T. Sugawa]{Toshiyuki Sugawa}
\address{Graduate School of Information Sciences, Tohoku University, Aoba-ku, Sendai 980-8579, Japan}
\email{sugawa@tohoku.ac.jp}
\author[M. Vuorinen]{Matti Vuorinen}
\address{Department of Mathematics and Statistics,
	University of Turku, 
	FI-20014 Turku,
	Finland}
\email{vuorinen@utu.fi}

\date{\today, FILE: \jobname.tex}

\begin{document}

\vspace{-1cm}

\begin{abstract} 
We introduce the chordal isometric circle of a M\"obius map, and use this to give a factorization of any M\"obius map as the composition of a chordal isometry and either a reflection, or a rotary reflection, across a circle. We then use this to find the chordal, and spherical, Lipschitz constants of a M\"obius map, and compare this with related results in the literature.
\end{abstract}

\maketitle

{\bf Keywords:} {M\"obius maps, reflections, unitary matrices, Lipschitz constants}

{\bf MSC classification:} Primary: 51F15, 51F30. Secondary: 30C40, 30F35

\section{Introduction}\label{Sec1}
\vspace{-12pt}

The group of $2\times 2$ non-singular complex matrices, with identity matrix $I$, is denoted by $\GL(2,\mathbb{C})$. The subgroup of matrices $A$ satisfying $\det (A) = 1$ is denoted by $\SL(2,\mathbb{C})$, and this acts on the extended complex plane (or Riemann sphere) $\mathbb{C}_\infty$ as the group
$\mathbf{M}$ of M\"obius maps $g_A$, where 
\begin{equation}\label{eqn1.1}
g_A(z) = \frac{az+b}{cz+d},\qquad
A = \begin{pmatrix}a&b\\c&d\end{pmatrix}, \quad ad-bc = 1.
\end{equation}
It is well known that $\mathbf{M}$ is the group of analytic automorphisms of the Riemann sphere $\mathbb{C}_\infty$ under the composition $(f,g)\mapsto fg$, where 
$(fg)(z) = f\big(g(z)\big)$, and we denote the
identity M\"obius map by $\mathbf{I}$. The map $A\mapsto g_A$ is a group homomorphism of $\SL(2,\mathbb{C})$ onto $\mathbf{M}$ with the kernel $\{\pm I\}$, so $\mathbf{M}$ is isomorphic to $\SL(2,\mathbb{C})/\{\pm I\}$. In particular, $g_I = \mathbf{I}$, $g_{AB} = g_Ag_B$ and $g_A^{-1} = g_{A^{-1}}$.

In the original version of \cite{For51} (reprinted in 1951) Ford defined the \emph{isometric circle} $C_g$ of a M\"obius map $g$ that does not fix $\infty$ to be the set $\{z\in \mathbb{C}\colon |g'(z)|=1\}$. Let $g$ be given by \eqref{eqn1.1}. Then, since $|g'(z)| = 1/|cz+d|^2$, $C_g$ is a Euclidean circle, and as $g$ preserves the Euclidean lengths of arcs on $C_g$ we see that $g(C_g)$ is a circle with the same radius as $C_g$. In fact, $g$ maps $C_g$ onto $C_{g^{-1}}$ because $g^{-1}g = \mathbf{I}$, and an application of the Chain rule to $g^{-1}g$ shows that $z \in C_{g}$ if and only if $g(z) \in C_{g^{-1}}$. If $c\neq 0$ then (by a direct calculation) we have $g = SR$, where
\begin{equation}\label{eqn1.2}
R(z) = 
\frac{1 - d(\overline{cz+d})}
{c(\overline{cz+d})}, \quad
S(z) = \frac{a-\overline{(cz+d)}}{c}.
\end{equation} 
and Ford gave a geometric description of the maps $R$ and $S$. First, by elementary geometry, $R$ is the inversion across the isometric circle $C_g$ of $g$. Next, $S$ is a Euclidean isometry so it must be either the reflection across a Euclidean line, or a glide reflection (that is, a translation followed by a reflection across an invariant line of the translation). Now $S$ is a reflection if and only if it has a fixed point in $\mathbb{C}$, so $S$ is a reflection if and only if $a+d$ is real (or, in the language of M\"obius maps, if and only if $g$ is elliptic, parabolic or hyperbolic, but not loxodromic). We summarize these remarks as follows.

\begin{theorem}\label{Thm1.1}
Suppose that $g$ is $g_A$ in \eqref{eqn1.1}, and that $c \neq 0$. Then $g = SR$, where $R$ is the reflection across the isometric circle $C_g$ of $g$, and $S$ is either a reflection (if $a+d$ is real) or a glide reflection (if $a+d$ is not real).
\end{theorem}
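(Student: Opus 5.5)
We verify three things for the maps $R$ and $S$ of \eqref{eqn1.2}: that $SR=g$, that $R$ is the inversion across $C_g$, and that $S$ is a Euclidean isometry, which is a reflection exactly when $a+d$ is real.

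\textbf{Composition.} Write $w=\overline{cz+d}$, so that
\[
R(z)=\frac{1-dw}{cw}.
\]
Then
\[
cR(z)+d=\frac{1-dw}{w}+d=\frac{1}{w}=\frac{1}{\overline{cz+d}},
\]
and hence
\[
\overline{cR(z)+d}=\frac{1}{cz+d}.
\]
Therefore
\[
S(R(z))=\frac{a-\frac{1}{cz+d}}{c}=\frac{a(cz+d)-1}{c(cz+d)}.
\]
Using $ad-bc=1$ gives $acz+ad-1=acz+bc$, and so $S(R(z))=(az+b)/(cz+d)=g(z)$.

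\textbf{$R$ is the inversion across $C_g$.} Since $|g'(z)|=1/|cz+d|^2$, the isometric circle is $C_g=\{z:|z+d/c|=1/|c|\}$. It has centre $z_0=-d/c$ and radius $r=1/|c|$. The computation above gives
\[
R(z)+\frac{d}{c}=\frac{1}{c\,\overline{(cz+d)}}=\frac{1}{|c|^2\,\overline{(z-z_0)}}.
\]
This says exactly that $R(z)-z_0=r^2/\overline{(z-z_0)}$, which is the formula for inversion in the circle $|z-z_0|=r$.

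\textbf{$S$ is an isometry, and when it is a reflection.} We have
\[
S(z)=\frac{a-\bar d}{c}-\frac{\bar c}{c}\,\bar z .
\]
Since $|\bar c/c|=1$, this is an anti-conformal Euclidean isometry $z\mapsto \alpha-\mu\bar z$ with $|\mu|=1$. By the classification of orientation-reversing isometries of the plane, such a map is either a reflection in a line or a glide reflection. It is a reflection exactly when it has a fixed point, and glide reflections have none.

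\textbf{Fixed points of $S$ (the main step).} Put $\mu=\bar c/c$ and $\alpha=(a-\bar d)/c$, and suppose $z=\alpha-\mu\bar z$. Conjugating gives $\bar z=\bar\alpha-\bar\mu z$. Substituting back yields $z=\alpha-\mu\bar\alpha+z$, using $\mu\bar\mu=1$. So a fixed point can exist only if $\alpha=\mu\bar\alpha$.

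Conversely, if $\alpha=\mu\bar\alpha$, then $z=\alpha/2$ is a fixed point, since
\[
\alpha-\mu\frac{\bar\alpha}{2}=\alpha-\frac{\alpha}{2}=\frac{\alpha}{2}.
\]
Hence $S$ has a fixed point if and only if $\alpha=\mu\bar\alpha$.

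Now compute
\[
\mu\bar\alpha=\frac{\bar c}{c}\cdot\frac{\bar a-d}{\bar c}=\frac{\bar a-d}{c}.
\]
So the condition $\alpha=\mu\bar\alpha$ becomes $a-\bar d=\bar a-d$, that is, $a+d=\overline{a+d}$. This holds precisely when $a+d$ is real.

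Finally, $a+d$ is independent of the choice of sign of $A$ up to $\pm$, so the criterion depends only on $g$.
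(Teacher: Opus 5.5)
Your proof is correct and follows the paper's argument: the paper also obtains $g=SR$ from \eqref{eqn1.2} by direct calculation, identifies $R$ as the inversion in $|z+d/c|=1/|c|$, notes that $S$ is an orientation-reversing Euclidean isometry, and distinguishes reflection from glide reflection by whether $S$ has a fixed point, which happens exactly when $a+d$ is real. You have written out the computations the paper leaves to the reader: the composition, the inversion formula, and the fixed-point criterion $\alpha=\mu\bar\alpha \iff a-\bar d=\bar a-d$.
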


Note that we refer to the \emph{reflection} across a circle rather than the classical term `\emph{inversion}' across a circle. Informally, Theorem \ref{Thm1.1} says that $g$ and $R$ distort infinitesimal Euclidean distances by the same amount, and that there is no distortion on the isometric circle of $g$. Ford then used isometric circles to construct the Ford fundamental region for the action of a Kleinian group (that is, a discrete group of M\"obius maps) acting on $\mathbb{C}_\infty$, and this important idea is still being used today. For more details, see, for example, \cite{Bea83,For51,Leh64,Mar07,Mat98}.

We come now to the main point of this paper. Ford gives a special role to the point $\infty$ but, because of this, his approach conflicts with the modern emphasis on the homogeneity of a group action. In fact, as soon as we use the point $\infty$ (as Ford does), we should be working in \emph{inversive  geometry} (in which no point is special), and for this reason alone we should seek an alternative to Ford's approach. Here we present a natural modification of Ford's ideas in which $\infty$ does not have a special role.


\section{Chordal isometric circles} \label{Sec2}

Our modification of Ford's idea is based on the chordal distance $\chi$ on $\mathbb{C}_\infty$ which is given by 
\begin{equation*}
\chi (z,w) = \frac{2|z-w|}{\sqrt{1+|z|^2}\sqrt{1+|w|^2}}, \qquad
\chi (z,\infty) = \frac{2}{\sqrt{1+|z|^2}}.
\end{equation*}
It is well known that if $p$ is the stereographic (or linear) projection, from $(0,0,1)$, of the unit sphere $\mathbb{S}$ in $\mathbb{R}^3$ onto the extended complex plane $\mathbb{C}_\infty$, then $\chi(z,w)$ is the Euclidean distance between the points $p^{-1}(z)$ and $p^{-1}(w)$ in $\mathbb{R}^3$. As there is a discussion of the stereographic projection in almost every basic text on complex analysis, we omit the details here. 

We shall now discuss these ideas in the context of the metric space $(\mathbb{C}_\infty,\chi)$ rather than that of the Euclidean plane. First, a \emph{chordal circle} is a circle in the metric space $(\mathbb{C}_\infty,\chi)$, say $\chi(z,a)=r$, and as this is the image (under stereographic projection) of a circle on the unit sphere in $\mathbb{R}^3$, \emph{a chordal circle is either a Euclidean circle or a Euclidean line with $\infty$ attached}. This terminology avoids the unnecessary abuse of calling a straight line a circle. 
Next, the chordal distance leads naturally to the \emph{chordal derivative} $g^\chi (z)$ of a M\"obius map $g$, namely
\begin{equation*}
g^\chi (z) = \lim_{w\to z}\frac{\chi\big(g(w),g(z) \big)}{\chi(w,z)} =
\frac{(1+|z|^2)|g'(z)|}{1+|g(z)|^2},
\end{equation*}
and if $g$ is given by \eqref{eqn1.1}, then 
\begin{equation*}
g^\chi (z) = 
\frac{1+|z|^2}{|az+b|^2+|cz+d|^2},
\quad 
g^\chi (\infty) = \frac{1}{|a|^2+|c|^2}.
\end{equation*}

Finally, a M\"obius map is a \emph{chordal isometry} if it is an isometry of the metric space $(\mathbb{C}_\infty,\chi)$. We shall need the next result  (see, e.g. \cite[Theorems 2.5.1 and 4.2.2]{Bea83}), and we shall discuss chordal isometries again in Section \ref{Sec3}.

\begin{theorem}\label{Thm3.1}
Suppose that $g$ is given by \eqref{eqn1.1}.
Then the following are equivalent:\\
\begin{tabular}{rl}
{\rm (i)} &$g$ is a chordal isometry;\\
{\rm (ii)} &for all $z$, $g^\chi(z)=1$;\\
{\rm (iii)} &for all $z$, $|az+b|^2 + |cz+d|^2 = 1+|z|^2$;\\
{\rm (iv)} &$|a|^2 + |c|^2 = 1 = |b|^2 +|d|^2$ and $a\bar b + c\bar d = 0$;\\
{\rm (v)} &$d= \bar a$ and $b = -\bar c$;\\
{\rm (vi)} &$|a|^2+|b|^2+|c|^2+|d|^2 = 2$.
\end{tabular}
\end{theorem}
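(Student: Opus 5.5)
The plan is to prove the cycle of implications (i)$\Rightarrow$(ii)$\Rightarrow$(iii)$\Rightarrow$(iv)$\Rightarrow$(v)$\Rightarrow$(iii)$\Rightarrow$(i), and then to attach (vi) to the cycle separately.

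\emph{(i)$\Rightarrow$(ii).} If $g$ is a chordal isometry, then the quotient in the definition of $g^\chi(z)$ equals $1$ for every $w\neq z$. Hence the limit is $1$.

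\emph{(ii)$\Leftrightarrow$(iii).} For $z\in\mathbb{C}$ this is immediate from the formula $g^\chi(z)=(1+|z|^2)/(|az+b|^2+|cz+d|^2)$.

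\emph{(iii)$\Rightarrow$(iv).} Expand the left side of (iii) as
\[
(|a|^2+|c|^2)|z|^2+2\re\big((a\bar b+c\bar d)z\big)+|b|^2+|d|^2 .
\]
Put $z=0$ to get $|b|^2+|d|^2=1$. Divide by $|z|^2$ and let $z\to\infty$ to get $|a|^2+|c|^2=1$. The remaining identity $\re(\lambda z)=0$ for all $z$, where $\lambda=a\bar b+c\bar d$, forces $\lambda=0$.

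\emph{(iv)$\Rightarrow$(v).} Conditions (iv) say exactly that $A$ has orthonormal columns, so $A$ is unitary and $A^{-1}=A^*$. Since $\det A=1$, we also have $A^{-1}=\begin{pmatrix}d&-b\\-c&a\end{pmatrix}$. Comparing entries gives $d=\bar a$ and $b=-\bar c$.

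\emph{(v)$\Rightarrow$(iii).} With $d=\bar a$ and $b=-\bar c$ we have $|a|^2+|c|^2=ad-bc=1$, and (iii) follows by direct expansion because the cross terms cancel.

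\emph{(iii)$\Rightarrow$(i).} This is the main step. It needs the identity
\[
\chi(g(z),g(w))=\frac{2|z-w|}{\sqrt{|az+b|^2+|cz+d|^2}\,\sqrt{|aw+b|^2+|cw+d|^2}},
\]
which follows from $g(z)-g(w)=(z-w)/\big((cz+d)(cw+d)\big)$. To verify it, multiply the numerator and denominator of $\chi(g(z),g(w))$ by $|cz+d||cw+d|$. Substituting (iii) into this identity gives $\chi(g(z),g(w))=\chi(z,w)$. The case where a point is $\infty$ is handled the same way, or by continuity of $\chi$.

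\emph{(vi).} For (v)$\Rightarrow$(vi), the sum of squares is $2(|a|^2+|c|^2)=2$. For the converse, use
\[
|a-\bar d|^2+|b+\bar c|^2=\big(|a|^2+|b|^2+|c|^2+|d|^2\big)-2\re(ad-bc)=2-2=0 ,
\]
which yields (v).

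The only real obstacle is the algebraic identity for $\chi(g(z),g(w))$ used in (iii)$\Rightarrow$(i); every other step is bookkeeping.
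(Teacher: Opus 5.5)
Your proof is correct and follows essentially the same route as the paper: your identity for $\chi(g(z),g(w))$ in (iii)$\Rightarrow$(i) is the paper's \eqref{eqn3.0}, your expansion in (iii)$\Rightarrow$(iv) is \eqref{eqn2.2}, and your treatment of (vi) uses the same identity $\|A\|^2=|a-\bar d|^2+|b+\bar c|^2+2$. The only cosmetic difference is in (iv)$\Rightarrow$(v). There you compare $A^{-1}$ with $A^*$ entrywise, while the paper uses uniqueness of the solution of a $2\times 2$ linear system; the two arguments amount to the same thing.
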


For a matrix $A = \begin{pmatrix}a&b\\c&d\end{pmatrix}$,
the quantity $\|A\|=\sqrt{|a|^2+|b|^2+|c|^2+|d|^2}$ is called the Frobenius norm
of $A\,.$ It is well known that $\|A\|^2 \ge 2 |{\rm det}(A)|$ \cite[p. 12]{Bea83}  and hence $\|A\|^2\ge 2$ for all $A\in \SL(2,\mathbb{C})\,.$
See also the proof of Theorem \ref{Thm3.1} in Section 3 as well as Section 5 for details.

We can now define what we mean by the chordal isometric circle of a M\"obius map.

\begin{definition}{\rm 
The \emph{chordal isometric circle} $C_g^\chi$ of a M\"obius map $g$ is  
$\{z\in \mathbb{C}_\infty \colon
g^\chi(z) = 1\}$; thus if $g$ is given by \eqref{eqn1.1} then
\begin{equation*}
C_g^\chi = 
\{z\in \mathbb{C}_\infty \colon
|az+b|^2+|cz+d|^2 = 1+|z|^2\}.
\end{equation*}
 }\end{definition}

Since
\begin{equation}\label{eqn2.2}
|az+b|^2+|cz+d|^2 -(1+|z|^2) =
\big(|a|^2+|c|^2-1\big)|z|^2 + 2\re(\bar ab+\bar cd)\bar z +\big(|b|^2+|d|^2-1\big), 
\end{equation} 
we see that the chordal isometric circle of $g$ in \eqref{eqn1.1} is also given by 
\begin{equation}\label{eqn2.1}
\big(1-(|a|^2+|c|^2)\big)|z|^2 - 2{\rm Re}\big[(\bar{a}b + \bar{c}d)\bar{z}\big] + \big(1-(|b|^2+|d|^2)\big) = 0.
\end{equation}

Clearly, $C_g^\chi$ is \emph{a Euclidean 
circle} when $|a|^2+|c|^2 \neq 1$, and 
\emph{a Euclidean line with $\infty$ attached} when $|a|^2+|c|^2 = 1$ and $\bar{a}b + \bar{c}d \neq 0$, and  
Theorem~\ref{Thm3.1} shows that if $g$ is not a chordal isometry then $C_g^\chi$ is one of these forms. Theorem~\ref{Thm3.1} also shows that if $g$ is a chordal isometry then $C_g^\chi = \mathbb{C}_\infty$, and this mirrors the Euclidean case in which if $g$ is a Euclidean isometry then the Ford isometric circle is all of $\mathbb{C}$. For this reason, we do not consider the isometric circle of any M\"obius map that is either one of these isometries. However, without doubt, the most important point here is that \emph{if $g$ is not a chordal isometry then the chordal isometric circle of $g$ is a chordal circle}.

The chordal derivative obviously satisfies the chain rule 
\begin{equation*}
(f\circ g)^\chi(z) = f^\chi\big(g(z)\big)g^\chi(z),
\end{equation*}
so that $g$ maps its chordal isometric circle onto the chordal isometric circle of $g^{-1}$.
We can now state our `chordal version' of Theorem \ref{Thm1.1}.

\begin{theorem}\label{Thm2.1}
Let $g$ be the M\"obius map in \eqref{eqn1.1}, and suppose that $g$ is not a chordal isometry. Then $g = S_\chi R_\chi$, where $R_\chi$ is the reflection across the chordal isometric circle of $g$, and $S_\chi$ is a chordal isometry.
\end{theorem}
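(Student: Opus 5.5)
The plan is to reduce to a diagonal normal form by pre- and post-composing $g$ with chordal isometries, to verify the factorization there by a direct computation, and then to transport it back. The natural tool is the singular value decomposition of $A$, taken inside $\SU(2)$.

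\emph{Step 1 (normal form).} Since $A^*A$ is a positive definite Hermitian matrix with determinant $1$, it can be diagonalized by a unitary matrix, and after multiplying by a scalar of modulus one we may take this matrix in $\SU(2)$. This gives $A = UDV$ with $U,V\in\SU(2)$ and $D=\mathrm{diag}(\lambda,1/\lambda)$, $\lambda\geq 1$. By Theorem~\ref{Thm3.1}(v), $g_U$ and $g_V$ are chordal isometries. Moreover $\|A\|^2=\|D\|^2=\lambda^2+\lambda^{-2}$, and since $g$ is not a chordal isometry, Theorem~\ref{Thm3.1}(vi) gives $\lambda>1$. I expect this step to be the only real obstacle. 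One must check that the unitary factors can be taken of determinant one, which is done by absorbing a scalar $e^{i\theta}$ and its inverse. One must also check that the Frobenius norm is invariant under multiplication by unitary matrices, so that $\|D\|=\|A\|$.

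\emph{Step 2 (the diagonal case).} Put $g_1=g_D$, so that $g_1(z)=\lambda^2 z$. Then
\[
g_1^\chi(z)=\frac{\lambda^2(1+|z|^2)}{1+\lambda^4|z|^2},
\]
and this equals $1$ exactly when $|z|=1/\lambda$. Hence $C^\chi_{g_1}$ is the circle $|z|=1/\lambda$, and the reflection across it is $R_1(z)=1/(\lambda^2\bar z)$. A one-line computation gives $g_1R_1(z)=1/\bar z$. This map is the reflection across the unit circle, which corresponds to reflecting $\mathbb{S}$ in its equatorial plane and is therefore a (sense-reversing) chordal isometry; call it $J$. Thus $g_1=JR_1$.

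\emph{Step 3 (transport).} Write $g=g_U g_1 g_V$. By the chain rule for chordal derivatives, and because $g_U^\chi\equiv g_V^\chi\equiv 1$, we have $g^\chi(z)=g_1^\chi(g_V(z))$. Hence $C_g^\chi=g_V^{-1}(C_{g_1}^\chi)$. Next, conjugating a reflection across a circle by a M\"obius map gives the reflection across the image circle. So $R_\chi:=g_V^{-1}R_1g_V$ is the reflection across $C_g^\chi$. Finally,
\[
g=g_U J R_1 g_V=(g_UJg_V)(g_V^{-1}R_1g_V)=S_\chi R_\chi,
\]
where $S_\chi=g_UJg_V$ is a composition of chordal isometries and hence a chordal isometry. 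Equivalently, and as a check, $S_\chi=gR_\chi$ because $R_\chi$ is an involution.
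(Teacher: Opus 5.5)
Your proof is correct, but it takes a genuinely different route from the paper's.

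\textbf{The paper's route.} The paper obtains Theorem~\ref{Thm2.1} from the explicit formulae of Theorem~\ref{new_revised}. It completes the square in \eqref{eqn2.1} to find the centre and radius of $C_g^\chi$, handling the line case $|a|^2+|c|^2=1$ separately. It then writes down the reflection $R_\chi$ of \eqref{eqn2.4} directly and computes $S_\chi=gR_\chi$ as a matrix product. Finally it shows that $S_\chi$ is a chordal isometry: the matrix $M$ of $z\mapsto S_\chi(\bar z)$ satisfies $\|M\|^2=-2\det M$, so a suitable scalar multiple of $M$ lies in $\SL(2,\mathbb{C})$ with Frobenius norm $\sqrt2$, and Theorem~\ref{Thm3.1}(vi) applies.

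\textbf{Your route.} You use the singular value decomposition. This is the paper's Lemma~\ref{L4.1}, which the paper exploits only in Section~\ref{Sec6}, in the guise of Lemma~\ref{Lem6.2}. You combine it with naturality: $C^\chi_g$, the reflection across it, and the isometry property all transform correctly under pre- and post-composition by chordal isometries. So everything reduces to $z\mapsto\lambda^2 z$, where $g_1R_1=J$ is a one-line check.

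\textbf{What your route buys.} It is coordinate-free. It needs no circle/line case split, since lines through $\infty$ arise simply as $g_V^{-1}$-images of $|z|=1/\lambda$. It avoids the $\|M\|^2=-2\det M$ computation. It also exhibits $C^\chi_g$ as a chordal-isometric image of the circle $|z|=1/\lambda$ with $\lambda^2=\mu(A)$. This links the decomposition directly to Theorem~\ref{Thm6.1}, since ${\rm Lip}_\chi(g)={\rm Lip}_\chi(R_\chi)={\rm Lip}_\chi(g_1)=\lambda^2$.

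\textbf{What the paper's route buys.} Its computation yields what Theorem~\ref{new_revised} is really after: closed formulae for $R_\chi$, $S_\chi$, $z_\chi$ and $r$ in terms of $a,b,c,d$. These feed the worked examples and Theorem~\ref{Thm4.1}. Recovering them from your proof would require computing $U$ and $V$ explicitly.
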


From now on we shall use the following notation: for each M\"obius $g$ we write
\\ $\bullet$ \quad $g = S_FR_F$ for the Ford decomposition of $g$ (given in Theorem~\ref{Thm1.1}), where $R_F$ is the reflection across the Ford isometric circle, and $S_F$ is a Euclidean isometry;
\\ $\bullet$ \quad $g = S_\chi R_\chi$ for the chordal decomposition of $g$ (given in Theorem~\ref{Thm2.1}), where $R_\chi$ is the reflection across the chordal isometric circle, and $S_\chi$ is a chordal isometry.\\ 
This modification from isometric circles to chordal isometric circles provides both a bridge, and a clarification, of the close, but distinct, relationship, between Ford's Euclidean theory and the familiar theory of fundamental regions based on hyperbolic spaces and the Dirichlet fundamental region: for more details, see \cite{BSV}. The novelty here lies in the explicit expressions given below, and the application of Theorem~\ref{Thm2.1} (later in this paper) to give an elementary way of computing the Lipschitz constant of a M\"obius map with respect to the chordal distance. 

We will indeed prove Theorem \ref{Thm2.1} in Section \ref{Sec4}
by deducing the following explicit formulae.

\begin{theorem}\label{new_revised}
Under the same hypotheses as Theorem \ref{Thm2.1},
the mappings $S_\chi$ and $R_\chi$ are explicitly given by
\begin{equation}\label{eqn2.4}
S_\chi(z) = \frac{(b+\bar{c})\bar{z} -
(a-\bar{d})}{-(\bar{a}-d)\bar{z} -
(\bar{b}+c)},
\quad 
R_\chi(z) = 
\frac{(\bar{a}b+\bar{c}d)\bar{z} -
[1-(|b|^2+|d|^2)]}
{[1-(|a|^2+|c|^2)]\bar{z} - (a\bar{b}+c\bar{d})}.
\end{equation}
Moreover, when $|a|^2+|c|^2\ne 1,$ the chordal isometric circle 
$C_g^\chi$ is the Euclidean circle $|z-z_\chi| = r,$ where
\begin{equation}\label{eqn2.3}
z_\chi = \frac{\bar ab+\bar cd}{1-(|a|^2+|c|^2)},
\quad
r = \frac{\sqrt{|a|^2+|b|^2+|c|^2+|d|^2-2}}
{\big|1-(|a|^2+|c|^2)\big|}.
\end{equation}
When $|a|^2+|c|^2=1,$ $C_g^\chi$ is the Euclidean line given by
\begin{equation*}
\re\big((a\bar{b} + c\bar{d})z\big) = 1 - (|b|^2 + |d|^2).
\end{equation*}
\end{theorem}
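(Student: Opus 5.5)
The proof is a direct matrix computation. We encode anti-M\"obius maps $z\mapsto (p\bar z+q)/(r\bar z+s)$ by matrices acting on $\bar z$. Write $\alpha = |a|^2+|c|^2$, $\beta = |b|^2+|d|^2$ and $\gamma = \bar a b+\bar c d$. The equation of $C_g^\chi$ is \eqref{eqn2.1}, i.e. $(1-\alpha)|z|^2-2\re(\gamma\bar z)+(1-\beta)=0$.

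\textbf{Step 1: the circle.} When $\alpha\neq 1$, divide by $1-\alpha$ and complete the square; this gives the circle $|z-z_\chi|=r$ with $z_\chi=\gamma/(1-\alpha)$ and
\[
r^2(1-\alpha)^2=|\gamma|^2-(1-\alpha)(1-\beta).
\]
Using $|\gamma|^2=\alpha\beta-|ad-bc|^2=\alpha\beta-1$ (the Cauchy--Binet/Lagrange identity for the columns of $A$), the right side becomes $\alpha\beta-1-1+\alpha+\beta-\alpha\beta=\alpha+\beta-2=\|A\|^2-2$. This is positive because $g$ is not a chordal isometry (Theorem~\ref{Thm3.1}(vi)), and it yields \eqref{eqn2.3}. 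When $\alpha=1$, the equation reduces to $2\re(\gamma\bar z)=1-\beta$, and $\re(\gamma\bar z)=\re(\bar\gamma z)=\re((a\bar b+c\bar d)z)$, giving the stated line.

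\textbf{Step 2: $R_\chi$ is the reflection.} The reflection across the circle $|z-z_\chi|=r$ is $z\mapsto z_\chi+r^2/(\bar z-\bar z_\chi)$. Put this over a common denominator and multiply through by $(1-\alpha)$. The numerator becomes $z_\chi(1-\alpha)\bar z-|z_\chi|^2(1-\alpha)+r^2(1-\alpha)$, which by Step 1 equals $\gamma\bar z-(1-\beta)$ after dividing by $(1-\alpha)$ once more. The denominator becomes $(1-\alpha)\bar z-\bar\gamma$. This matches \eqref{eqn2.4}, since $\bar\gamma=a\bar b+c\bar d$. For the line case, note that the formula with $\alpha=1$ reads $R_\chi(z)=(\gamma\bar z-(1-\beta))/(-\bar\gamma)$. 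One checks that it fixes the line pointwise (using $\re(\gamma\bar z)=(1-\beta)/2$) and is an anti-conformal involution, hence it is the reflection. Alternatively, one can argue uniformly: a matrix $\begin{pmatrix}p&q\\ r&s\end{pmatrix}$ acting on $\bar z$ with $p=-\bar s$ real-type Hermitian structure is exactly the reflection in the circle $r|z|^2-2\re(\dots)+\dots=0$.

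\textbf{Step 3: $S_\chi$.} Define $S_\chi:=gR_\chi^{-1}=gR_\chi$. Multiply the matrix $A$ by the (conjugate-linear) matrix of $R_\chi$, namely $\begin{pmatrix}\gamma & \beta-1\\ 1-\alpha & -\bar\gamma\end{pmatrix}$. Then expand each entry using $ad-bc=1$. For example, the top-left entry is $a\gamma+b(1-\alpha)=|a|^2b+a\bar c d+b-|a|^2b-|c|^2b=b+\bar c(ad-bc)=b+\bar c$; the other entries simplify similarly. Up to a common scalar, this yields the matrix of $S_\chi$ in \eqref{eqn2.4}, which has the form $\begin{pmatrix}p&q\\ -\bar q&\bar p\end{pmatrix}$ with $p=b+\bar c$ and $q=-(a-\bar d)$. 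Since $z\mapsto\bar z$ is a chordal isometry and, by Theorem~\ref{Thm3.1}(v), such a M\"obius part is a chordal isometry (after normalizing the determinant $|b+\bar c|^2+|a-\bar d|^2=\|A\|^2+2\re(\dots)>0$), $S_\chi$ is a chordal isometry. This proves Theorem~\ref{Thm2.1} as well.

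The main obstacle is the bookkeeping in Step 3. One must simplify the four product entries with $ad-bc=1$ and confirm that the determinant $|a-\bar d|^2+|b+\bar c|^2$ is nonzero. It vanishes only when $d=\bar a$ and $b=-\bar c$, which would make $g$ an isometry and is excluded by hypothesis.
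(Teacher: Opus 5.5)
Your proof follows the paper's route: complete the square and use the Lagrange identity $|\gamma|^2=\alpha\beta-1$ to get $z_\chi$ and $r$; put $z_\chi+r^2/(\bar z-\bar z_\chi)$ over a common denominator to get $R_\chi$; handle the line case by checking fixed points; and obtain $S_\chi=gR_\chi$ from the product $A\begin{pmatrix}\gamma&\beta-1\\ 1-\alpha&-\bar\gamma\end{pmatrix}$, whose entries you simplify correctly. Two claims in it are wrong as written.

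\textbf{The line case.} You correctly derive $2\re(\gamma\bar z)=1-\beta$, that is, $\re\big((a\bar b+c\bar d)z\big)=\tfrac12\big(1-(|b|^2+|d|^2)\big)$. You then say this gives ``the stated line'', but it does not: the printed equation lacks the factor $\tfrac12$. For $g(z)=z+1$ the chordal isometric circle is $\re z=-\tfrac12$, whereas the printed equation gives $\re z=-1$. The printed line is a misprint. The paper's own proof also arrives at $2\re(\sigma\bar z)=\mu$, which is exactly the fixed-point set of the stated $R_\chi$. You should flag the correction rather than claim agreement. You should also record that $\gamma\neq 0$ here, which follows from your identity: when $\alpha=1$, $|\gamma|^2=\beta-1=\|A\|^2-2>0$.

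\textbf{The matrix of $S_\chi$.} The product is $M=\begin{pmatrix}p&q\\ \bar q&-\bar p\end{pmatrix}$ with $p=b+\bar c$ and $q=\bar d-a$. It is not of the form $\begin{pmatrix}p&q\\ -\bar q&\bar p\end{pmatrix}$ that you claim, and $\det M=-(|p|^2+|q|^2)=-(\|A\|^2-2)<0$. So no positive rescaling puts $M$ in $\SL(2,\mathbb{C})$, and Theorem~\ref{Thm3.1}(v) does not apply to $M$ as it stands. The repair is the paper's: with $\lambda=(\|A\|^2-2)^{-1/2}$, the matrix $i\lambda M$ has determinant $1$ and has the form $\begin{pmatrix}a'&-\bar c'\\ c'&\bar a'\end{pmatrix}$ (equivalently, $\|i\lambda M\|^2=2$). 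Hence $z\mapsto S_\chi(\bar z)=g_M(z)$ is a chordal isometry. This slip affects only your side claim about Theorem~\ref{Thm2.1}, not the explicit formulae.
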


We can show that $S_\chi$ is a chordal isometry as follows. Since $z \mapsto \bar{z}$ is a chordal isometry it is enough to show that $z \mapsto S(\bar{z})$ is a chordal isometry. Now $S(\bar{z}) = g_M(z)$, where
\begin{equation*}
M = 
\begin{pmatrix}
b+\bar{c} &\bar{d}-a\\
d-\bar{a} & -(\bar{b}+c)
\end{pmatrix}.
\end{equation*}
A calculation shows that $\|M\|^2 = -2\det(M)$ so $\det(M) < 0$. Now choose a positive $\lambda$ such that $i\lambda M \in {\rm SL}(2,\mathbb{C})$; that is, so that 
$-\lambda^2\det(M)=1$. Then $g_M = 
g_{i \lambda M}$, and $g_{i\lambda M}$ is a chordal isometry because $\|i\lambda M\|^2 = 2$.

We give two examples to illustrate Theorems \ref{Thm2.1} and \ref{new_revised} before giving their proofs. Note that, in these (and the later) examples, the explicit maps can be computed either by using Theorems \ref{new_revised}, or by making direct geometric arguments.

\noindent {\bf Example 1} \quad
Let 
\begin{equation*}
g(z) = \frac{z-3}{2z/3-1}.
\end{equation*}
Then the Ford isometric circle for $g$ is given by $|2z -3| = 3$, and the Ford decomposition of $g$ is $g = S_FR_F$, where
\begin{equation*}
R_F(z) = \frac{3\bar{z}}{2\bar{z} - 3}, 
\quad S_F(z) = 3-\bar{z}.
\end{equation*}
We can confirm this by showing directly that
$R_F(z)=z$ if and only if $|2z -3| = 3$. Clearly $S_F$ is the reflection across the line given by $x=3/2$. The chordal decomposition of $g$ is $g = S_\chi R_\chi$, where
\begin{equation*}
S_\chi(z) = \frac{7\bar{z}+6}{6\bar{z}-7}, \quad 
R_\chi(z) = 
\frac{33\bar{z} -81}{4\bar{z}-33}.
\end{equation*}

Next, $R_\chi(z)=z$ if and only if 
$4|z|^2 -66x + 81 = 0$, and one can easily check that this is indeed the chordal isometric circle of $g$. Next, $S_\chi$ is the reflection across the circle given by $S_\chi(z)=z$; that is, by the equation $|6z-7| = \sqrt{85}$. As $S_\chi$ is a chordal isometry, this must be a great circle, and we can confirm this by showing that
\begin{equation*}
\chi\left(\frac{7+\sqrt{85}}{6},
\frac{7-\sqrt{85}}{6}\right) = 2.
\end{equation*}
The invariant circles for $S_F$ and $R_F$ are illustrated (in dashed lines) in Figure \ref{fig1}, where the invariant circles for $S_\chi$ and $R_\chi$ are shown in solid lines.

\begin{figure} [ht]
\begin{center}
\begin{tikzpicture}[scale=0.25]
\draw [thick,->] (-2,0) -- (16,0);
\draw [thick,->] (0,-7) -- (0,7);

\draw [ultra thick,dashed] (1.5,0) circle [radius=1.5];
\draw [ultra thick,dashed] (1.5,-7)--(1.5,7);

\draw [ultra thick] (8.25,0) circle [radius=6.915];
\draw [ultra thick] (1.1667,0) circle [radius=1.5366];
\end{tikzpicture}
\caption{$g(z) = (z-3)/(2z/3-1)$ (Example1)}
\label{fig1}
\end{center}
\end{figure}

\noindent
{\bf Example 2} \quad
Let 
\begin{equation*}
g(z) = \frac{z-2}{3z-5}.
\end{equation*}
Then the Ford isometric circle for $g$ is given by $|3z-5|=1$, and $g=S_FR_F$, where
\begin{equation*}
R_F(z) = \frac{5\bar{z}-8}{3\bar{z}-5},
\quad S_F(z) = 2-\bar{z}.
\end{equation*}
Next, the chordal isometric circle for $g$ is 
$|9z - 17| = \sqrt{37}$, and $g = S_\chi R_\chi$, where
\begin{equation*}
R_\chi(z) = \frac{17\bar{z}-28}{9\bar{z}-17},
\quad S_\chi(z) = \frac{-\bar{z}+6}{6\bar{z}+1}.
\end{equation*}
A calculation shows that $S_\chi(z)$ is the reflection across the circle given by
$|6z+1| = \sqrt{37}$, and as $S_\chi$ is a chordal isometry, this circle is a great circle. The circles that are invariant under these reflections are illustrated in Figure~\ref{fig2} with the convention as before. Finally, note that the two `solid' circles are almost, but \emph{not}, tangent to each other.

\begin{figure} [ht]
\begin{center}
\begin{tikzpicture}[scale=1.3]
\draw [thick,->] (-2,0) -- (2,0);
\draw [thick,->] (0,-1.5) -- (0,1.5);

\draw [ultra thick,dashed] (1.666,0) circle [radius=0.333];
\draw [ultra thick,dashed] (1,-1.5)--(1,1.5);

\draw [ultra thick] (1.888,0) circle [radius=0.675];
\draw [ultra thick] (0.1666,0) circle [radius=1.0138];
\end{tikzpicture}
\caption{$g(z) = (z-2)/(3z-5)$ (Example 2)}
\label{fig2}
\end{center}
\end{figure}


\section{Chordal isometries} 
\label{Sec3}

For the convenience of the reader we begin with our proof of (the known) Theorem~\ref{Thm3.1}.

\begin{proof}
[The proof of Theorem \ref{Thm3.1}]
We begin by asking the reader to verify the identity 
\begin{equation}\label{eqn3.0}
g^\chi(z)g^\chi(w) =
\left[\frac{\chi\big(g(z),g(w)\big)}{\chi(z,w)}\right]^2 = 
\frac{(1+|z|^2)(1+|w|^2)}
{\big(|az+b|^2 +|cz+d|^2\big)\,
\big(|aw+b|^2 +|cw+d|^2\big)}.
\end{equation}
The proof of the equivalence of (i)--(vi) follows easily from this identity and \eqref{eqn2.2}, and we leave the details to the reader.  To show that (iv) implies (v) we note that $(x,y) = (\bar d, - \bar b)$ is the unique solution of the equation
\begin{equation*}
\begin{pmatrix}\bar a & \bar c \\ \bar b &\bar d\end{pmatrix}
\begin{pmatrix} x \\ y \end{pmatrix}
 = \begin{pmatrix} 1 \\ 0 \end{pmatrix}.
\end{equation*}
However, if (iv) holds then $(a,c)$ is also a solution so $(a,c) = (\bar{d},-\bar{b})$, and (v) holds. Conversely, it is obvious that (v) implies (iv). Lastly, it is obvious that (iv) implies (vi). Conversely, if $A\in\SL(2,\mathbb{C})$ then $ad-bc = 1$ so that $\|A\|^2 = |a-\bar d|^2+|b+\bar c|^2 + 2$; thus (vi) implies (iv). In fact, if $A\in\SL(2,\mathbb{C})$ then
$\|A\|^2 \geq 2$ with equality if and only if $g_A$ is a chordal isometry.
\end{proof}

Theorem \ref{Thm3.1} gives algebraic information about M\"obius maps that are chordal isometries. In terms of geometry, it is well known that a M\"obius map $g$ is a chordal isometry if and only if $p^{-1}gp$ (where $p$ is the stereographic projection) is a rotation of the unit sphere $\mathbb{S}$ in $\mathbb{R}^3$. Thus if we denote the subgroup of chordal isometries in $\mathbf{M}$ by ${\rm Isom}(\chi)$, we have the next result.

\begin{lemma}\label{Lem3.2}
The group ${\rm Isom}(\chi)$ acts transitively on $\mathbb{C}_\infty$.
\end{lemma}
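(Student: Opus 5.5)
It is enough to show that every point $w\in\mathbb{C}_\infty$ can be mapped to $0$ by some chordal isometry. Indeed, if $g_w(w)=0$ and $g_{w'}(w')=0$ with $g_w,g_{w'}\in{\rm Isom}(\chi)$, then $g_{w'}^{-1}g_w$ maps $w$ to $w'$. This map lies in ${\rm Isom}(\chi)$, since the chordal isometries clearly form a subgroup of $\mathbf{M}$.

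The construction will use the characterization (v) of Theorem~\ref{Thm3.1}. A M\"obius map $g_A$ with $A\in\SL(2,\mathbb{C})$ is a chordal isometry precisely when
\begin{equation*}
A=\begin{pmatrix} a & -\bar c\\ c & \bar a\end{pmatrix},\qquad |a|^2+|c|^2=1 .
\end{equation*}
For $w=\infty$ we take $a=0$, $c=1$. This gives $g(z)=-1/z$, which is a chordal isometry sending $\infty$ to $0$. For $w\in\mathbb{C}$ we want $aw-\bar c=0$, so we choose
\begin{equation*}
a=\frac{1}{\sqrt{1+|w|^2}},\qquad c=\frac{\bar w}{\sqrt{1+|w|^2}}.
\end{equation*}
Then $|a|^2+|c|^2=1$, so the matrix has determinant $1$ and $g_A$ is a chordal isometry by Theorem~\ref{Thm3.1}. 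Its formula is
\begin{equation*}
g_A(z)=\frac{z-w}{\bar w z+1},
\end{equation*}
and clearly $g_A(w)=0$. It is also worth checking the denominator at $z=w$: it equals $|w|^2+1\neq 0$, so $g_A(w)$ is a genuine finite value.

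There is no real obstacle here. The only point needing care is the case $w=\infty$, and the only fact used beyond the explicit formula is the equivalence (v)$\Leftrightarrow$(i) from Theorem~\ref{Thm3.1}.

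An alternative route is geometric: $p^{-1}gp$ ranges over the rotations of $\mathbb{S}$, and ${\rm SO}(3)$ acts transitively on the sphere. However, the algebraic argument above is self-contained given Theorem~\ref{Thm3.1}, so I would use that one.
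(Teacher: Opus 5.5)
Your proof is correct, but it takes a different route from the paper. The paper gives no computation. It quotes the well-known fact that a M\"obius map $g$ is a chordal isometry exactly when $p^{-1}gp$ is a rotation of $\mathbb{S}$. Transitivity then follows because the rotation group acts transitively on the sphere; this is essentially the ``alternative route'' you mention at the end.

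You instead reduce to moving an arbitrary point to $0$, and write down the explicit chordal isometries $z\mapsto -1/z$ and $z\mapsto (z-w)/(\bar w z+1)$. You certify them via condition (v) of Theorem~\ref{Thm3.1}, and the details all check out: $\det A=|a|^2+|c|^2=1$, the choice $\bar c=aw$ forces $g_A(w)=0$, and the subgroup/composition step is sound.

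What your version buys is self-containment and explicitness. The geometric argument tacitly needs more than the quoted ``if and only if''. It needs that every rotation $\rho$ of $\mathbb{S}$ is induced by some M\"obius map, i.e.\ that $g\mapsto p^{-1}gp$ maps ${\rm Isom}(\chi)$ onto the rotation group. Without that, $p\rho p^{-1}$ is merely a chordal isometry of $\mathbb{C}_\infty$, not yet known to lie in $\mathbf{M}$. Your argument uses only the algebraic criterion already proved in Theorem~\ref{Thm3.1}, and it produces the isometries explicitly.

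The paper's version is shorter and conceptual, and it transfers directly to higher dimensions, where explicit matrix formulae of this kind are less convenient.
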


We have observed that the Euclidean isometry $S$ (which is now denoted by $S_F$) in Theorem \ref{Thm1.1} is either the reflection across a Euclidean line, or a glide reflection. We shall now discuss the corresponding possibilities for the chordal isometry $S$ (now denoted by $S_\chi$) in Theorem \ref{Thm2.1}.

Let $g$ be the M\"obius chordal isometry
\begin{equation}\label{eqn3.1}
g(z) = \frac{az-\bar{c}}{cz + \bar{a}}, \quad |a|^2+|c|^2 = 1,
\end{equation}
and let $S(z) = g(\bar{z})$. Since $z \mapsto \bar{z}$ is a chordal isometry, so too is $S$. Thus if $S^* = p^{-1}Sp$, then $S^*$ is an \emph{indirect} Euclidean isometry of the unit sphere $\mathbb{S}$ in $\mathbb{R}^3$ onto itself (that is, it is given by a $3\times 3$ orthogonal matrix with determinant $-1$). Now it is well known that such a map is either a \emph{reflection} $R$ of $\mathbb{R}^3$ across a plane $\Pi$ that passes through the origin $0$ of $\mathbb{R}^3$, or a \emph{rotary reflection} (that is, a reflection $R$ as just mentioned, followed by a \emph{non-trivial} rotation about the normal to $\Pi$ which passes through the origin); see, for example, \cite{Mar82},   \cite[p.168]{Neu94} and \cite[p.10]{Wil2008}. 
Now there is a simple geometric distinction between a reflection and a rotary reflection (and this is the same distinction that there is between a reflection and a glide reflection): a reflection of $\mathbb{S}$ onto itself has a fixed point on the sphere; a rotary reflection of $\mathbb{S}$ onto itself has no fixed points. Now $S^*$ has a fixed point on $\mathbb{S}$ if and only if $S$ has a fixed point in $\mathbb{C}_\infty$; that is, if and only if the equation $g(\bar{z}) = z$ has a solution in $\mathbb{C}_\infty$. Our proof of the next result is based on this observation.

\begin{theorem}\label{Thm3.3}
Let $g$ be as in \eqref{eqn3.1}. Then $S^*$ is\\
\begin{tabular}{rl}
{\rm (i)}
&a reflection if and only if \/ $\re(c) =0$;\\
{\rm (ii)}
&a rotary reflection if and only if \/ $\re(c)\neq 0$.
\end{tabular}
\end{theorem}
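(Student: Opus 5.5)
By the observation preceding the statement, $S^*$ is a reflection exactly when $S$ has a fixed point in $\mathbb{C}_\infty$, and otherwise it is a rotary reflection. So (i) and (ii) are complementary, and it suffices to prove one fact: the equation $g(\bar z)=z$ has a solution $z\in\mathbb{C}_\infty$ if and only if $\re(c)=0$.

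First I dispose of $z=\infty$. We have $S(\infty)=g(\infty)$, which equals $\infty$ exactly when $c=0$, and then $\re(c)=0$ trivially. So when $c=0$ a fixed point exists and the claim holds. From now on assume $c\neq 0$, so that $\infty$ is not fixed. Clearing denominators in $(a\bar z-\bar c)/(c\bar z+\bar a)=z$ gives the fixed-point equation
\begin{equation*}
c|z|^2+\bar a z-a\bar z+\bar c=0 .
\end{equation*}
The term $\bar a z-a\bar z=-2i\,\im(a\bar z)$ is purely imaginary, so I split the equation into real and imaginary parts.

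\begin{itemize}
\item \emph{Real part:} $\re(c)\,(|z|^2+1)=0$. Since $|z|^2+1>0$, this forces $\re(c)=0$. So if $\re(c)\neq 0$ there is no fixed point, and $S^*$ is a rotary reflection.
\item \emph{Imaginary part:} writing $c=it$ with $t\in\mathbb{R}\setminus\{0\}$ in the case $\re(c)=0$, the equation becomes $t|z|^2-2\im(a\bar z)-t=0$.
\end{itemize}

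To prove the converse, I must exhibit a solution of this last equation when $c=it$, $t\neq0$. At $z=0$ the left-hand side equals $-t$. As $|z|\to\infty$ it behaves like $t|z|^2$, which has the opposite sign to $-t$. The left-hand side is continuous on $\mathbb{C}$, which is connected, so it has a zero by the intermediate value theorem; this zero is a fixed point of $S$. Alternatively, one can complete the square: the equation is $|z-z_0|^2=1+|a|^2/t^2>0$ for a suitable centre $z_0$, a genuine circle of fixed points.

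The main point needing care is this existence direction, which is where the sign argument (or the radius computation) is essential. One must also not forget the point $\infty$, which is handled by the case $c=0$. Combining the two directions with the fixed-point criterion gives both (i) and (ii).
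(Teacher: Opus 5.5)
Your proof is correct and matches the paper's: you reduce to fixed points of $S$, take the real part $\re(c)(|z|^2+1)=0$ of the fixed-point equation to exclude fixed points when $\re(c)\neq 0$, and produce a circle of fixed points when $c$ is purely imaginary and nonzero. The only differences are cosmetic: you handle $c=0$ through the fixed point $\infty$, whereas the paper exhibits the line $\im(\bar a z)=0$, and you give an intermediate-value argument alongside completing the square.
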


\begin{proof}
It is enough to prove 
\\ (i) that $\re(c) =0$ implies that $S$ has fixed points (for then $S^*$ is a reflection), and 
\\ (ii) that $\re(c)\neq 0$ implies that $S$ has no fixed points (for then $S^*$ is a rotary reflection). 
\\ Now the equation $S(z)=z$ is $g(\bar{z}) = z$, and this is
\begin{equation}\label{eqn3.2}
c|z|^2 + 2i\im(\bar{a}z) + \bar{c} = 0.
\end{equation}
We let $c = c_1 + ic_2$ and $a=a_1+ia_2.$ 
Then, by considering the real parts, we obtain $c_1\big(|z|^2 + 1) = 0$. Thus if $c_1 \neq 0$ then $S$  has no fixed points, and $S^*$ is a rotary reflection.

It remains to prove that if $c_1=0$ then the equation \eqref{eqn3.2} does have a solution in $\mathbb{C}_\infty$, so we assume now that
$c_1=0$. If $c_2= 0$ then $c=0$ and $|a|^2=1$ so that $a \neq 0$. Then \eqref{eqn3.2} reduces to 
$\im(\bar{a}z) = 0$, and this determines the Euclidean line of fixed points of $S$. Thus in this case $S^*$ does have fixed points. Now consider the remaining case; that is, when $c_1 = 0$ and $c_2\neq 0$. In this case \eqref{eqn3.2} is equivalent to the single equation
\begin{equation*}
|z|^2 + \frac{2}{c_2}\im(\bar{a}z) = 1,
\end{equation*}
and this is equivalent to the equation of a Euclidean circle, namely
\begin{equation*}
\left(x - \frac{a_2}{c_2} \right)^2 +
\left(y + \frac{a_1}{c_2} \right)^2 
=
1+ \frac{|a|^2}{c_2^2}. 
\end{equation*}
Thus in this case, $S$ has fixed points, and $S^*$ is a reflection.
\end{proof}

We give two examples. 

\noindent
{\bf Example 3} \quad
Let
\begin{equation*}
g(z) = \frac{3z+4i}{4iz+3} =
\frac{(3/5)z + (4/5)i}{(4/5)iz + (3/5)},\quad
(3/5)^2 + (4/5)^2 = 1,
\end{equation*}
and $S(z) = g(\bar{z})$.
As $\re(4i/5) = 0$ we see that $S^*$ is a 
reflection. To check this we could simply check that $g(\bar{z})=z$ has  solutions, and these solutions form the circle of fixed points of $S$ in $\mathbb{C}_\infty$.

\noindent
{\bf Example 4} \quad
Let $g(z) = -1/z$, and $S(z) = g(\bar{z})
= -1/\bar{z}$. Now it is well known that if $w \in \mathbb{C}_\infty$, then $-1/\bar{w}$ is the unique point $v$ such that $p^{-1}(w)$ and $p^{-1}(v)$ are antipodal (that is, diametrically opposite) points on $\mathbb{S}$. In other words, in this example,
$S^*(\mathbf{x}) =-\mathbf{x}$ for each $\mathbf{x}$ on $\mathbb{S}$. Clearly, $S^*$ has no fixed points on $\mathbb{S}$, so $S^*$ is a rotary reflection. In fact, $S^*$ is the reflection across $\mathbb{C}$ (the horizontal co-ordinate plane in $\mathbb{R}^3$) followed by a rotation of angle $\pi$ about the vertical co-ordinate axis.


\section{The proof of Theorem \ref{new_revised}}
\label{Sec4}

\begin{proof}[The proof of Theorem~\ref{new_revised}]
To simplify the calculations, we write
\begin{equation*}
\lambda = 1 - (|a|^2+|c|^2), \quad \mu = 1 - (|b|^2 + |d|^2), \quad \sigma = \bar{a}b + \bar{c}d.
\end{equation*}
It then follows from \eqref{eqn2.1} that the chordal isometric circle $C_g^\chi$ is given by $\lambda|z|^2 -2\re(\sigma \bar{z}) + \mu = 0$.
We first assume $\lambda \neq 0$, so 
$C_g^\chi$ is given by
\begin{equation*}
\left|z-\frac{\sigma}{\lambda}\right|^2
= \frac{|\sigma|^2-\lambda\mu}{\lambda^2}.
\end{equation*}
Since 
\begin{equation*}
\frac{\sigma}{\lambda} = 
\frac{\bar{a}b+\bar{c}d}{1-(|a|^2+|c|^2)}
\end{equation*}
this confirms the formula for $z_\chi$ in \eqref{eqn2.3}.
Next, as
\begin{align*}
|\sigma|^2 + 1
&= |\bar a b+\bar cd|^2+|ad-bc|^2 \\
&= (|a|^2+|c|^2)(|b|^2+|d|^2) \\
&= (1-\lambda)(1-\mu)\\
&= \lambda\mu - \lambda - \mu + 1,
\end{align*}
we see that 
\begin{equation*}
\frac{|\sigma|^2-\lambda\mu}{\lambda^2} =
\frac{-(\lambda+\mu)}{\lambda^2} =
\frac{|a|^2+|b|^2+|c|^2+|d|^2-2}{(1-(|a|^2+|c|^2))^2}
\end{equation*}
and this confirms the formula for $r$ in 
\eqref{eqn2.3}.

Next, we establish \eqref{eqn2.4} in the case $\lambda\ne0.$
As $C_g^\chi$ is the circle $|z-z_\chi| = r$,  the reflection $R$ across $C^\chi_g$ is given by
\begin{equation*}
R(z) = z_\chi+\frac{r^2}{\bar z-\bar z_\chi}
 = \frac{z_\chi\bar z+r^2-|z_\chi|^2}{\bar z-\bar z_\chi} =
 \frac{\sigma\bar{z} - \mu}
{\lambda\bar{z} - \bar{\sigma}},
\end{equation*}
and this gives the formula for $R_\chi$ in \eqref{eqn2.4}.

Since $R_\chi$ has order two we see that 
$S_\chi = gR_\chi$, so 
\begin{equation*}
S_\chi(z) = \frac{A\bar{z}+B}{C\bar{z}+D},
\end{equation*}
where
\begin{equation*}
\begin{pmatrix}A&B\\C&D\end{pmatrix} =
\begin{pmatrix}a&b\\c&d\end{pmatrix}
\begin{pmatrix}\sigma&-\mu \\\lambda
&-\bar{\sigma}
\end{pmatrix} = 
\begin{pmatrix} 
 a\sigma + b\lambda &-a\mu - b\bar\sigma \\
 c\sigma + d\lambda & -c\mu - d\bar\sigma
\end{pmatrix}
=
\begin{pmatrix} 
 b + \bar{c} & -a + \bar{d} \\
 -\bar{a}+d & -\bar{b}-c
\end{pmatrix}
.
\end{equation*}
We conclude that 
\begin{equation*}
S_\chi(z) = 
\frac{(a\sigma + b\lambda)\bar{z} -
(a\mu + b\bar\sigma)}
{(c\sigma + d\lambda)\bar{z} -
(c\mu + d\bar\sigma)} =
\frac{(b+\bar{c})\bar{z}-(a-\bar{d})}
{-(\bar{a}-d)\bar{z} -(\bar{b}+c)},
\end{equation*}
and this completes our proof of Theorem~\ref{new_revised} when $\lambda\ne0.$

When $\lambda=0,$ it follows from \eqref{eqn2.1} that $C_g^\chi$ is the Euclidean line given by 
$2\re(\sigma \bar z)=\sigma\bar{z}+\bar\sigma z= \mu$. 
We now see that this line is same as the set of fixed points of the map
$R(z)=(\sigma\bar{z}-\mu)/(-\bar\sigma),$ which implies that $R=R_\chi.$
Observe that this $R(z)$ agrees with the reflection $R(z)$ in the previous case.
Hence, we conclude that the same formulae are valid also when $\lambda=0.$
\end{proof}

We end this section with a result which shows when the Ford decomposition for $g$ is the same as the chordal decomposition for $g$.

\begin{theorem}\label{Thm4.1}
Suppose that a M\"obius map $g$ given by \eqref{eqn1.1} is not a chordal isometry
and that it does not fix $\infty.$
Then, in the notation above, $R_F = R_\chi$ and $S_F = S_\chi$ if and only if $d = \bar{a}$.
\end{theorem}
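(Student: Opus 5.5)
Since $g = S_FR_F = S_\chi R_\chi$ and both $R_F$, $R_\chi$ are involutions, we have $S_F = gR_F$ and $S_\chi = gR_\chi$. Hence $S_F = S_\chi$ if and only if $R_F = R_\chi$. The task therefore reduces to deciding when the two reflections coincide. A reflection across a circle is determined by its circle of fixed points, so this happens exactly when the Ford isometric circle $C_g$ equals the chordal isometric circle $C_g^\chi$.

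\emph{Sufficiency.} Suppose $d=\bar a$. I will substitute this into the formula for $R_\chi$ in \eqref{eqn2.4} and compare with the Ford reflection
\[
R(z)=\frac{1-d(\overline{cz+d})}{c(\overline{cz+d})}=\frac{-d\bar c\bar z+1-|d|^2}{|c|^2\bar z+c\bar d}
\]
from \eqref{eqn1.2}. With $d=\bar a$ we get $|a|^2=|d|^2$, and $ad-bc=1$ gives $bc=|a|^2-1$. The coefficients of $R_\chi$ become
\[
\sigma=\bar a b+\bar c\bar a=\bar a(b+\bar c),\quad \lambda=1-|a|^2-|c|^2,\quad \mu=1-|b|^2-|a|^2 .
\]
I expect the two matrices to be proportional, with factor $-(b+\bar c)/\bar c$ or similar, once $bc=|a|^2-1$ is used. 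As a check, the $\bar z$-coefficient ratio $-d\bar c : \bar a(b+\bar c)$ equals $-\bar c : (b+\bar c)$, and the denominator coefficient ratio $|c|^2 : \lambda$ should match this after using $1-|a|^2=-bc$, since $\lambda=-bc-|c|^2=-c(b+\bar c)$. This is routine bookkeeping. I note that $b+\bar c\neq 0$, because otherwise $d=\bar a$ and $b=-\bar c$ would make $g$ a chordal isometry by Theorem~\ref{Thm3.1}(v). Also $c\ne0$ by hypothesis.

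\emph{Necessity.} Suppose $R_F=R_\chi$. Then $S_F=S_\chi$, so I compare the $S$ maps rather than the $R$ maps. By \eqref{eqn1.2}, $S_F(z)=(a-\bar d-\bar c\bar z)/c$ fixes $\infty$. By \eqref{eqn2.4}, $S_\chi(\infty)=(b+\bar c)/(-(\bar a-d))$, which equals $\infty$ exactly when $\bar a=d$, provided $b+\bar c\ne0$. If instead $b+\bar c=0$, then $S_\chi(\infty)=(a-\bar d)/(\bar a-d)$. This quotient is finite when $a\neq\bar d$, and when $a=\bar d$ we are again in the chordal-isometry case, which is excluded. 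So in every case $S_\chi(\infty)=\infty$ forces $d=\bar a$, and necessity follows from this observation about $\infty$.

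The main obstacle is the sufficiency computation: verifying that the two $2\times 2$ coefficient matrices are proportional, which relies on using $ad-bc=1$ in the right form. An alternative route for sufficiency is to compare circles directly. One would check that $|cz+d|=1$ and $\lambda|z|^2-2\re(\sigma\bar z)+\mu=0$ have the same centre $-d/c$ versus $\sigma/\lambda=\bar a(b+\bar c)/(-c(b+\bar c))=-\bar a/c$, which agree when $d=\bar a$, and the same radius. This may be cleaner, and I would use whichever is shorter.
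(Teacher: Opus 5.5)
Your proof is correct, but it handles the two directions the opposite way round from the paper.

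For sufficiency, the paper compares the $S$ maps rather than the reflections. When $d=\bar a$ the terms $a-\bar d$ and $\bar a-d$ drop out, so $S_F(z)=-(\bar c/c)\bar z$ and $S_\chi(z)=-\frac{b+\bar c}{\bar b+c}\,\bar z$. These agree because $bc=|a|^2-1$ is real, and $R_F=R_\chi$ then follows. That is a one-line check. Your comparison of the reflections needs all four entries, but it does go through. Put $t=c(b+\bar c)=bc+|c|^2\in\mathbb{R}$. Then $\sigma=\bar a t/c$, $\mu=-bt/\bar c$ and $\lambda=-t$, and the matrix of $R_\chi$ is exactly $-\frac{b+\bar c}{\bar c}$ times that of $R_F$. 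Equivalently, the centre is $-\bar a/c$ and $r^2=|b+\bar c|^2/t^2=1/|c|^2$.

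For necessity, the paper equates the centres $-d/c$ and $z_\chi$ and uses $ad-bc=1$. This tacitly requires $|a|^2+|c|^2\neq 1$, which is harmless since a line cannot equal the Ford circle. Your observation is shorter and needs no such remark: the Euclidean isometry $S_F$ fixes $\infty$, while $S_\chi$ fixes $\infty$ only when $d=\bar a$.

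One slip: when $b+\bar c=0$ you also have $\bar b+c=0$. Then $S_\chi(z)=(a-\bar d)/\bigl((\bar a-d)\bar z\bigr)$, so $S_\chi(\infty)=0$, not $(a-\bar d)/(\bar a-d)$. This does not affect the argument, since only finiteness matters. The case split is unnecessary anyway: $S_\chi(\bar z)=g_M(z)$ with $\det M=-|a-\bar d|^2-|b+\bar c|^2\neq 0$. So $S_\chi(\infty)=\infty$ exactly when the lower-left entry $d-\bar a$ of $M$ vanishes.
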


\begin{proof}
Suppose first that $d = \bar{a}$. Then, from \eqref{eqn1.2} and \eqref{eqn2.4}, we have 
\begin{equation*}
S_F(z) = 
-\left(\frac{\bar{c}}{c}\right)\bar{z},
\quad
S_\chi(z) = 
-\left(\frac{b+\bar{c}}{\bar{b}+c}\right)
\bar{z}.
\end{equation*}
Thus $S_\chi = S_F$ if and only if $bc = \bar{b}\bar{c}$ which is true since $bc = ad - 1 = |a|^2-1$. Thus $S_F = S_\chi$ and 
$R_F = S_Fg = S_\chi g = R_\chi$.

Now suppose that $R_F = R_\chi$ and $S_F = S_\chi$. Then the centres of the two isometric circles must coincide and, as these are $-d/c$ and $z_\chi$ (see \eqref{eqn2.3}), we have
\begin{equation*}
c\bar{a}b + c\bar{c}d = d|a|^2 + d|c|^2 - d.
\end{equation*}
Since $c\bar{c}d = |c|^2d$ and $ad-bc=1$ this reduces to $d = \bar{a}(ad-bc) = \bar{a}$.
\end{proof}

We give an example to illustrate 
Theorem~\ref{Thm4.1}.

\noindent
{\bf Example 5} \quad
Let 
\begin{equation*}
g(z) = \frac{z-1}{-2z-1} = 
\frac{(i/\sqrt{3})z -(i/\sqrt{3})}
{(-2i/\sqrt{3})z -(i/\sqrt{3})}.
\end{equation*}
Then the Ford Isometric circle for $g$ is given by $|2z+1| = \sqrt{3}$, and $g=S_FR_F$, where 
\begin{equation*}
S_F(z)=\bar{z}, \quad
R_F(z) = \frac{1-\bar{z}}{2\bar{z}+1}.
\end{equation*}
In this case, $d = \bar{a}$ so, by Theorem~\ref{Thm4.1}, we have $S_F=S_\chi$ and $R_F = R_\chi$; thus, for this $g$, the chordal decomposition of $g$ coincides with its Ford decomposition.


\section{The algebraic background}\label{Sec5}
\vspace{-12pt}

Given any $2\times 2$ complex matrix $A$ and its \emph{Hermitian transpose} $A^*$, say
\begin{equation}\label{eqn5.1}
A = \begin{pmatrix}a&b\\c&d\end{pmatrix}, 
\quad
A^* = 
\begin{pmatrix}\bar a&\bar c\\ \bar b&\bar d\end{pmatrix},
\end{equation}
we say that $A$ is \emph{unitary} if and only if $A^*A = I$; equivalently, $AA^*=I$.
As
\begin{equation}\label{eqn5.2}
A^*A = \begin{pmatrix}\bar a&\bar c\\ \bar b&\bar d\end{pmatrix}
\begin{pmatrix}a&b\\c&d\end{pmatrix}
 = \begin{pmatrix}|a|^2+|c|^2&\bar a b+\bar cd\\ a\bar b+c\bar d&|b|^2+|d|^2\end{pmatrix},
\end{equation}
we see that $A$ is unitary if and only if
\begin{equation*}
|a|^2+|c|^2 = |b|^2+|d|^2 = 1, \quad
a\bar b+c\bar d = 0.
\end{equation*}
It is now clear from Theorem \ref{Thm3.1} that a matrix $A$ in $\SL(2,\mathbb{C})$ is unitary if and only if the corresponding M\"obius map $g_A$ is a chordal isometry, and if and only if $d = \bar a$ and $b = -\bar c$. 

Next, we recall that the non-negative Frobenius norm $\|A\|$ of $A$ is defined by
\begin{equation*}
\|A\|^2 = |a|^2+|b|^2+|c|^2+|d|^2. 
\end{equation*}
Thus if $A$ is unitary then $\|A\|^2 = 2$. Also, if $A$ is unitary, then $|\det(A)|^2 = |\det(A)\det(A^*)| = 1$, so that $|\det(A)|=1$. We denote the set of unitary matrices in 
$\SL(2,\mathbb{C})$ by $\SU(2,\mathbb{C})$, and it is easy to see that this is a 
multiplicative group.

Now consider any matrix $A$ in 
$\SL(2,\mathbb{C})$. Then $\det(A^*A) = 1$ and ${\rm trace}(A^*A) = \|A\|^2$. Since a general $2\times 2$ matrix $X$ has characteristic polynomial
$t^2 -{\rm trace}(X)t + {\rm det}(X)=0$, we see that $A^*A$ has characteristic polynomial $t^2-\|A\|^2t + 1$. Thus the eigenvalues of $A^*A$ are 
\begin{equation*}
\mu_- = \tfrac12\big(\|A\|^2-\sqrt{(\|A\|^4-4}\big),
\quad\mu_+ = \tfrac12 \big(\|A\|^2+\sqrt{(\|A\|^4-4}\big),
\end{equation*}
where $0 < \mu_- \leq 1 \leq \mu_+$ and $\mu_-\mu_+ = 1$. The number
\begin{equation}\label{eqn5.3}
\mu(A) = \sqrt{\frac{\mu_+}{\mu_-}} = \mu_+
 = \frac{\|A\|^2+\sqrt{\|A\|^4-4}}{2}
\end{equation}
is known as the \emph{condition number} of the matrix $A$ (see \cite{Bj}, for example).
The condition numbers of matrices were probably first studied by A. Turing in 1948 (see \cite[p.~126]{h}), and since then condition numbers have become standard tools
in numerical linear algebra as 
they play an important role in the study of numerical stability of the solutions of systems of linear equations.
For a discussion of the Frobenius norm and the various condition numbers of matrices, see
\cite[pp. 14-24]{BMR}.

Now it is well known (from the theory of Hermitian matrices) that, because $0 < \mu_- \leq \mu_+$, there is a matrix $W$ in $\SU(2,\mathbb{C})$ that diagonalizes $A^*A$; explicitly,  
\begin{equation}\label{eqn5.4}
W^*(A^*A)W = 
\begin{pmatrix}
\mu_+&0 \\ 0&\mu_-
\end{pmatrix} = D^2,
\end{equation}
say, where $D$ is the diagonal matrix with diagonal entries $\sqrt{\mu_-}$ and $\sqrt{\mu_+}$. This observation provides us with a proof of the next lemma in which the formula $A = UDV$ is known as the \emph{singular value decomposition} of $A$.

\begin{lemma}\label{L4.1}
Suppose that $A$ in \eqref{eqn5.1} is in $ \in \SL(2,\mathbb{C})$. Then there are matrices $U$ and $V$ in $\SU(2,\mathbb{C})$ such that 
\begin{equation}\label{eqn5.5}
A = U
\begin{pmatrix}
\sqrt{\mu_+} &0 \\ 0&\sqrt{\mu_-}
\end{pmatrix}
V.\end{equation} 
\end{lemma}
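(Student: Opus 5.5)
The plan is to build $V$ and $U$ directly from the diagonalizing matrix $W$ of \eqref{eqn5.4}. Choose $W\in\SU(2,\mathbb{C})$ whose first column is a unit eigenvector of $A^*A$ for $\mu_+$ and whose second column is a unit eigenvector for $\mu_-$. Such a $W$ exists because $A^*A$ is Hermitian, so eigenvectors for distinct eigenvalues are orthogonal; if $\mu_+=\mu_-$, then $A^*A=I$ and every unitary $W$ works. To force $\det W=1$, multiply the second column by a unimodular scalar; this keeps it a unit eigenvector. Let $D_0$ be the diagonal matrix with entries $\sqrt{\mu_+}$ and $\sqrt{\mu_-}$, in that order, so that $W^*A^*AW=D_0^2$.

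Next, set $V=W^*$, which lies in $\SU(2,\mathbb{C})$ because this is a group, and define $U=AWD_0^{-1}$. Then $A=UD_0W^*=UD_0V$, as required in \eqref{eqn5.5}. It remains to show that $U\in\SU(2,\mathbb{C})$. First,
\begin{equation*}
U^*U = D_0^{-1}W^*A^*AWD_0^{-1} = D_0^{-1}D_0^2D_0^{-1}=I,
\end{equation*}
using that $D_0$ is real diagonal, so $D_0^*=D_0$. Second,
\begin{equation*}
\det U=\det A\,\det W\,\det D_0^{-1} = 1\cdot 1\cdot(\mu_+\mu_-)^{-1/2}=1,
\end{equation*}
since $\mu_+\mu_-=\det(A^*A)=1$.

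The only point needing care is arranging $\det W=1$, rather than merely $|\det W|=1$, with the eigenvalues in the order that matches \eqref{eqn5.5}. Both are handled by the column choice and rescaling above. Everything else is a direct substitution, so I expect no real obstacle.
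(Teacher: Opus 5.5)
Your proposal is correct and is the paper's argument: set $V=W^*$ and $U=AWD^{-1}$, then check that $U^*U=I$. You also verify $\det U=1$ and show that a $W\in\SU(2,\mathbb{C})$ exists with the eigenvalues in the order required by \eqref{eqn5.5}; the paper leaves both points implicit.
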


\begin{proof}
Let $U = AWD^{-1}$ and $V = W^*$. Then certainly, $A = UDW^* = UDV$. Also, since $W$ is in $\SU(2,\mathbb{C})$, so too is $V$. Finally, a straightforward argument (which we leave to the reader) shows that $U^*U=I$ so that $U \in \SU(2,\mathbb{C})$.
\end{proof}


\section{Lipschitz constants of M\"obius maps}\label{Sec6}
\vspace{-12pt}

In this section we consider the Lipschitz constants of a M\"obius map $g$ with respect to the chordal distance $\chi $ and the spherical, or great-circle, distance $\bar\chi$ on $\mathbb{C}_\infty$. To illustrate these ideas we begin with two known results of this type:
\\ (1) \quad
the Euclidean Lipschitz constant of the M\"obius automorphism
$z \mapsto (z-a)/(1-\bar a z)$ on the unit disk is $(1+|a|)/(1-|a|)$ 
(\cite[p.40 Thm 5.5.1]{Bea83});
\\ (2) \quad 
the chordal Lipschitz constant of the inversion $z \mapsto p+ {r^2}/(\overline{z}-\overline{p})$, where $r^2< 1+ |p|^2$, is 
\begin{equation*}
\left(\frac{u+1+ |p|^2-r^2}{2r} \right)^2,
 \quad u = \sqrt{(1+(|p|+r)^2)(1+(|p|-r)^2)}.
\end{equation*}
(\cite[p.43, Lemma 3.29]{Har20}).

We note that (2) can be put in an invariant  form, namely that if $R$ is the reflection across a chordal circle $C$ in $\mathbb{C}_\infty$, then 
\begin{equation*}
{\rm Lip}_\chi(R) = \frac{1+\sqrt{1-\delta^2}}{1-\sqrt{1-\delta^2}},
\end{equation*}
where $\delta = \tfrac12 {\rm diam}_\chi(C)$.
To see this observe that if $h$ is a chordal isometry then $hRh^{-1}$ is the reflection across $h(C)$ and, obviously, ${\rm Lip}_\chi(hRh^{-1}) = {\rm Lip}_\chi(R)$. In this way we may assume that
$C$ is given by $|z|=r$, where $0<r\leq 1$. It now follows from (2) (with $p=0$) that 
${\rm Lip}_\chi(R) = 1/r^2$. Now $\delta = \frac12\chi(-r,r)$ and if we put $r = 1/\exp(t)$, where $t \geq 0$, we find that $\delta = 1/\cosh t$. Then $\sqrt{1-\delta^2} = \tanh t$, so
\begin{equation*}
\frac{1+\sqrt{1-\delta^2}}{1-\sqrt{1-\delta^2}} =
\frac{1+\tanh t}{1-\tanh t} = 
\frac{\cosh t + \sinh t}{\cosh t - \sinh t} = \frac{1}{\exp(2t)} = r^2 = {\rm Lip}_\chi(R).
\end{equation*}

We now begin our general discussion of the Lipschitz constants of M\"obius maps. The infinitesimal form of the chordal distance gives rise to the Riemannian metric
\begin{equation*}
\lim_{w\to z}\frac{\chi (z,w)}{|z-w|}|dz| = \frac{2|dz|}{1+|z|^2},
\end{equation*}
which is called the spherical, or Fubini-Study, metric on $\mathbb{C}_\infty$, and this induces the spherical, or great-circle, distance
\begin{equation*}
\bar\chi(z,w) = \inf_\gamma \int_\gamma\frac{2|dz|}{1+|z|^2} 
\end{equation*}
on $\mathbb{C}_\infty$, where the infimum is taken over all rectifiable curves joining $z$ and $w$ in $\mathbb{C}_\infty$. By considering the angle $\theta$ (in $[0,\pi]$) between the two line segments $[0,p^{-1}(z)]$ and $[0,p^{-1}(w)]$ in $\mathbb{R}^3$, we see that
\begin{equation*}
\tfrac12 \chi(z,w) = \sin \tfrac12 \bar\chi(z,w);
\end{equation*}
thus a M\"obius map $g$ is a chordal isometry if and only if it is a spherical isometry.

We shall now consider the two Lipschitz constants
\begin{equation*}
{\rm Lip}_\chi(g) = \sup_{z,w\in\mathbb{C}_\infty, z \neq w} \frac{\chi (g(z),g(w))}{\chi (z,w)},
\qquad
{\rm Lip}_{\bar\chi}(g) = \sup_{z,w\in\mathbb{C}_\infty, z \neq w} \frac{\bar\chi(g(z),g(w))}{\bar\chi(z,w)},
\end{equation*}
and the quantity
\begin{equation*}
\|g^\chi \|_\infty = \sup_{z\in\mathbb{C}_\infty} g^\chi (z),
\end{equation*}
and discuss (in some detail) the following known result (see \cite[Theorem 2.4]{BL}).

\begin{theorem}\label{Thm6.1}
Suppose that $A\in\SL(2,\mathbb{C})$, and let $g = g_A$. Then
\begin{equation}\label{eqn6.1}
{\rm Lip}_\chi(g) = {\rm Lip}_{\bar\chi}(g) = \|g^\chi \|_\infty = \mu(A) = \frac{\|A\|^2+\sqrt{\|A\|^4-4}}{2}.
\end{equation}
\end{theorem}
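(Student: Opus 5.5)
The plan is to reduce to a diagonal matrix using Lemma \ref{L4.1}. Write $A = U D V$ with $U,V\in\SU(2,\mathbb{C})$ and $D=\mathrm{diag}(\sqrt{\mu_+},\sqrt{\mu_-})$. By Theorem \ref{Thm3.1} (and the discussion in Section \ref{Sec5}), $g_U$ and $g_V$ are chordal isometries, hence also spherical isometries. All four quantities ${\rm Lip}_\chi$, ${\rm Lip}_{\bar\chi}$, $\|g^\chi\|_\infty$ and $\mu(A)$ are unchanged when $g$ is replaced by $g_U^{-1} g g_V^{-1}$. For the Lipschitz constants this is immediate. For $\|g^\chi\|_\infty$ it follows from the chain rule, since isometries have chordal derivative $1$. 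For $\mu(A)$ note that $\|UDV\|=\|D\|$, because the Frobenius norm is unitarily invariant; equivalently, $A^*A$ is conjugate to $D^2$. So it suffices to treat $g(z) = kz$ with $k=\mu_+ = \mu(A)\geq 1$, since $\sqrt{\mu_+}/\sqrt{\mu_-}=\mu_+$.

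For $g(z)=kz$, the formula for the chordal derivative gives
\begin{equation*}
g^\chi(z) = \frac{k(1+|z|^2)}{1+k^2|z|^2},
\end{equation*}
which is a monotone function of $|z|^2$ between the values $k$ at $z=0$ and $1/k$ at $z=\infty$. Hence $\|g^\chi\|_\infty = k=\mu(A)$.

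Next comes the ordering of the constants. Letting $w\to z$ in the definitions gives $\|g^\chi\|_\infty\le {\rm Lip}_\chi(g)$ and $\|g^\chi\|_\infty \le {\rm Lip}_{\bar\chi}(g)$, because both $\chi$ and $\bar\chi$ are infinitesimally $2|dz|/(1+|z|^2)$. For the reverse inequality in the spherical case, $\bar\chi$ is a length metric: integrate along a geodesic $\gamma$ to get $\bar\chi(g(z),g(w))\le \int_\gamma g^\chi\, ds\le \|g^\chi\|_\infty\,\bar\chi(z,w)$. Thus ${\rm Lip}_{\bar\chi}(g)=\|g^\chi\|_\infty$.

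The main obstacle is the chordal Lipschitz constant, since $\chi$ is not a length metric and one cannot simply integrate. Here I would use the explicit identity \eqref{eqn3.0}:
\begin{equation*}
\left[\frac{\chi(g(z),g(w))}{\chi(z,w)}\right]^2 = g^\chi(z)g^\chi(w)\le \|g^\chi\|_\infty^2,
\end{equation*}
which gives ${\rm Lip}_\chi(g)\le \|g^\chi\|_\infty$ at once. For general $g$ this identity holds directly from \eqref{eqn3.0}, so the diagonal reduction is not even needed for this step; the reduction is used only to evaluate the supremum. Combining the two inequalities for each metric yields \eqref{eqn6.1}.
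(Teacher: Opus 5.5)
Your proof is correct and is essentially the paper's argument: the paper reduces to $z\mapsto\mu z$ by composing with chordal isometries (via Lemma~\ref{Lem6.2}), and then proves ${\rm Lip}_\chi(g)={\rm Lip}_{\bar\chi}(g)=\|g^\chi\|_\infty$ for general $g$ using \eqref{eqn3.0} and integration along curves, exactly as you do. The only variation is that you get the reduction from the singular value decomposition of Lemma~\ref{L4.1} instead of the cited Lemma~\ref{Lem6.2}; this keeps the argument self-contained, gives the dilation factor as $\mu_+=\mu(A)$ directly, and leaves only $\|g^\chi\|_\infty$ to compute in the diagonal case.
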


Some of these relations are already recorded in the literature, in different circumstances (for example, see \cite[p.33]{Bea91}) and with different proofs, and as there does not seem to be a coherent account anywhere, it may be helpful to give a summary, and a comparison, of these results and their old, and new, proofs. 

The last equation in \eqref{eqn6.1} is \eqref{eqn5.3}, so we need only consider the first four terms in \eqref{eqn6.1}. Next, \eqref{eqn6.1} certainly holds when $g$ is a chordal isometry since then all terms are equal to $1$. Thus, from now on, we may assume that $g$ is not a chordal isometry, and the next lemma provides the key fact in this case.

\begin{lemma}\label{Lem6.2}
Suppose that the M\"obius map $g$ is not a chordal isometry. Then there is some $\mu$ with $\mu > 1$, and chordal isometries $h_1, h_2$ such that $h_1gh_2(z) = \mu z$.
\end{lemma}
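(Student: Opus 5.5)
The natural route is the singular value decomposition of Lemma~\ref{L4.1}. Write $g=g_A$ with $A\in\SL(2,\mathbb{C})$. Lemma~\ref{L4.1} gives $U,V\in\SU(2,\mathbb{C})$ with $A=UDV$, where
\begin{equation*}
D=\begin{pmatrix}\sqrt{\mu_+}&0\\0&\sqrt{\mu_-}\end{pmatrix}.
\end{equation*}
Passing to M\"obius maps, $g=g_U\,g_D\,g_V$. Here $g_D(z)=\sqrt{\mu_+}\,z/\sqrt{\mu_-}=\mu_+ z$, since $\mu_+\mu_-=1$. So I will take $h_1=g_U^{-1}=g_{U^*}$, $h_2=g_V^{-1}=g_{V^*}$ and $\mu=\mu_+=\mu(A)$, which gives $h_1gh_2(z)=\mu z$.

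It remains to check two facts.

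\textbf{(a) $h_1$ and $h_2$ are chordal isometries.} Their matrices lie in $\SU(2,\mathbb{C})$. A unitary matrix of determinant $1$ satisfies (iv) of Theorem~\ref{Thm3.1}, since this is exactly the unitarity condition computed in \eqref{eqn5.2}. Hence $g_U$ and $g_V$, and therefore their inverses, are chordal isometries. Equivalently, $\|U\|^2=\operatorname{trace}(U^*U)=2$, so (vi) applies.

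\textbf{(b) $\mu>1$.} From the formula for the eigenvalues of $A^*A$ we have $\mu_+>1$ if and only if $\|A\|^4>4$, that is, $\|A\|^2>2$. The proof of Theorem~\ref{Thm3.1} shows $\|A\|^2\ge 2$, with equality exactly when $g_A$ is a chordal isometry. Since $g$ is assumed not to be a chordal isometry, $\|A\|^2>2$ and so $\mu>1$.

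The main obstacle is that Lemma~\ref{L4.1} leaves the unitarity of $U=AWD^{-1}$ to the reader, so I would verify it directly:
\begin{equation*}
U^*U=D^{-1}W^*A^*AWD^{-1}=D^{-1}D^2D^{-1}=I
\end{equation*}
by \eqref{eqn5.4}. Also $\det U=\det A\,\det W/\det D=1$, because $\det D=\sqrt{\mu_+\mu_-}=1$.

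One must also match the ordering of the diagonal entries in \eqref{eqn5.4} with $D$, so that $D^2=\operatorname{diag}(\mu_+,\mu_-)$ and the expansion factor is $\mu_+$ rather than $\mu_-$. If the order came out reversed, I would compose with the chordal isometry $z\mapsto -1/z$ (matrix $\begin{pmatrix}0&-1\\1&0\end{pmatrix}\in\SU(2,\mathbb{C})$). Conjugating $z\mapsto \mu_- z$ by it gives $z\mapsto \mu_+ z$, so this can be absorbed into $h_1$ and $h_2$.
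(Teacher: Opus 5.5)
Your proof is correct. It differs from what the paper actually relies on. The paper does not prove Lemma~\ref{Lem6.2} itself: it quotes Theorem~1 of \cite{BM}, where the result is established by geometric arguments and the double coset decomposition. It then only remarks that the singular value decomposition of Lemma~\ref{L4.1} is an instance of double cosets of $\SL(2,\mathbb{C})$ with respect to $\SU(2,\mathbb{C})$, and that $\mu$ is `easy to see' to be the condition number.

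You turn that remark into a complete proof. Reading off $g_D(z)=\mu_+ z$ from $A=UDV$ gives the chordal isometries $h_1,h_2$ and the identification $\mu=\mu(A)$ in one step. The inequality $\mu>1$ then follows from the strict inequality $\|A\|^2>2$ in the proof of Theorem~\ref{Thm3.1}. Your check that $U=AWD^{-1}$ is unitary with determinant $1$ also fills the step the paper leaves to the reader.

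What your route buys is self-containment within Section~\ref{Sec5}, together with exactly the value of $\mu$ that the first proof of Theorem~\ref{Thm6.1} needs. The geometric route in \cite{BM} instead works with the action of chordal isometries on $\mathbb{C}_\infty$ rather than with matrices.

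One minor point: the detour through $z\mapsto -1/z$ is harmless but unnecessary. You can order the eigenvector columns of $W$ so that $D^2=\operatorname{diag}(\mu_+,\mu_-)$, and multiply one column by a unimodular scalar to make $\det W=1$.
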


Lemma \ref{Lem6.2} is Theorem 1 in \cite{BM}, where it is proved using geometric ideas and the double coset decomposition of a group. If $G$ is any group, and $H$ is a subgroup of $G$, then the double cosets are the equivalence classes $HgH$. For example, the singular value decomposition of a matrix $A$ described in Lemma~\ref{L4.1} is an example of the use of double cosets for the group
$\SL(2,\mathbb{C})$ with respect to the subgroup $\SU(2,\mathbb{C})$. In fact, it is easy to see that $\mu$ in Lemma \ref{Lem6.2} is the condition number of the matrix $g$.

It is clear that the three values ${\rm Lip}_\chi(g)$, ${\rm Lip}_{\bar\chi}(g)$ and 
$\|g^\chi\|_\infty$ are unchanged if we replace $g$  by $hg$, or $gh$, where $h$ is any chordal isometry. It is also known (and easy to check) that if $A \in \SL(2,\mathbb{C})$  and $X \in \SU(2,\mathbb{C})$, then $\|XA\| = \|A\| =\|AX\|$; see \cite[Section 2.5]{Bea83}. These facts combined with Lemma \ref{Lem6.2} show that \eqref{eqn6.1} holds for every M\"obius map $g$ \emph{if and only if it holds for all $g$ of the form} $g(z) = \mu z$. Now it is straightforward to check that $\eqref{eqn6.1}$ holds for all maps of the form $z\mapsto \mu z$ (we leave this as an exercise for the reader), and this completes our first proof of Theorem \ref{Thm6.1}.

We turn now to other ideas and other proofs. First, an alternative proof of ${\rm Lip}_\chi(g) = \|g^\chi \|_\infty$ can be found in 
\cite[Lemma 2.1]{BL}, and this is as follows. We know from \eqref{eqn3.0}
that for a M\"obius map $g$,
\begin{equation*}
\frac{\chi (g(z),g(w))}{\chi (z,w)} = \sqrt{g^\chi (z)g^\chi (w)},
\end{equation*} 
and ${\rm Lip}_\chi(g) = \|g^\chi \|_\infty$  follows immediately from this. Moreover, it is also shown in \cite{BL} that $g^\chi$ attains its maximum value at exactly one point in $\mathbb{C}_\infty$.

Next, it is obvious that $\chi \leq \bar\chi$ and 
\begin{equation*}
g^\chi(z) = 
\lim_{w\to z} \frac{\chi (g(z),g(w))}{\chi (z,w)}
=
\lim_{w \to z} \frac{\bar\chi(g(z),g(w))}{\bar\chi(z,w)}
\end{equation*}
so that 
\begin{equation*}
\|g^\chi \|_\infty\leq {\rm Lip}_\chi(g) 
\leq {\rm Lip}_{\bar\chi}(g).
\end{equation*} 

On the other hand, for any smooth curve $\gamma$ 
joining $z$ and $w$ in $\mathbb{C}_\infty$, we have
\begin{align*}
\bar\chi(g(z),g(w))
&\leq \int_{g(\gamma)} 
\frac{2\,|ds|}{1+|s|^2}\\[4pt]
&= \int_\gamma 
\frac{2|g'(t)|\,|dt|}{1+|g(t)|^2} \\[4pt]
&= \int_\gamma\frac{2g^\chi(t)\,|dt|}{1+|t|^2}
\\[4pt]
&\leq \|g^\chi\|_\infty\,
\int_\gamma\frac{2\,|dt|}{1+|t|^2}.
\end{align*}
If we now take the infimum of the last term over all possible curves $\gamma$ we obtain
$\bar\chi(g(z),g(w))\leq \|g^\chi \|_\infty \bar\chi(z,w)$. Since $z$ and $w$ are arbitrary, we conclude that ${\rm Lip}_{\bar\chi}(g)\le\|g^\chi \|_\infty$, and this proves that
for each M\"obius map $g$ we have
\begin{equation*}
{\rm Lip}_\chi(g) = {\rm Lip}_{\bar\chi}(g) = \|g^\chi \|_\infty.
\end{equation*}

Finally, there is yet another approach which uses hyperbolic geometry on the upper half $\mathbb{H}^3$ of $\mathbb{R}^3$, or the open unit ball $\mathbb{B}^3$ in $\mathbb{R}^3$; see \cite[Theorem 3.6.1]{Bea83}. Indeed, if we combine 
\begin{equation*}
{\rm Lip}_\chi(g) = \exp d_{\mathbb{H}^3}(j,g(j)), \quad j = (0,0,1),
\end{equation*}
where $d_{\mathbb{H}^3}$ is the hyperbolic distance in $\mathbb{H}^3$, and
\begin{equation*}
\|A\|^2 = \cosh d_{\mathbb{H}^3}(j,g_A(j)), \quad A\in \SL(2,\mathbb{C}),
\end{equation*}
we obtain ${\rm Lip}_\chi(g_A) = \mu(A)$ (see also \cite[Theorem 2.3.2, p.\,33]{Bea91}).
This approach through hyperbolic geometry is valid in higher dimensions although in these higher dimensions, several of the convenient expressions for M\"obius maps are not available. In the case of four dimensions, these ideas can be expressed in terms of the algebra of quaternions; see, for example, \cite[Theorem 7]{Wil93}.


\end{document}